\DeclareFontFamily{U}{euf}{}
\DeclareFontShape{U}{euf}{m}{n}{%
  <5><6><7><8><9>gen*eufm%
  <10><10.95><12><14.4><17.28><20.74><24.88>eufm10%
  }{}
\DeclareFontShape{U}{euf}{b}{n}{%
  <5><6><7><8><9>gen*eufb%
  <10><10.95><12><14.4><17.28><20.74><24.88>eufb10%
  }{}
\DeclareFontFamily{U}{msb}{}
\DeclareFontShape{U}{msb}{m}{n}{%
  <5><6><7><8><9>gen*msbm%
  <10><10.95><12><14.4><17.28><20.74><24.88>msbm10%
  }{}
\DeclareFontFamily{U}{msa}{}
\DeclareFontShape{U}{msa}{m}{n}{%
  <5><6><7><8><9>gen*msam%
  <10><10.95><12><14.4><17.28><20.74><24.88>msam10%
  }{}
\newtheorem{theorem}{Theorem}[section]
\newtheorem{lemma}[theorem]{Lemma}
\newtheorem{proposition}[theorem]{Proposition}
\theoremstyle{definition}
\newtheorem{remark}[theorem]{Remark}
\numberwithin{equation}{section}
\begin{document}

\title[$(S,\{2\})$-Iwasawa theory]
{The $(S,\{2\})$-Iwasawa theory}

\author{Su Hu and Min-Soo Kim}

\address{Department of Mathematics, South China University of Technology, Guangzhou, Guangdong 510640, China}
\email{hus04@mails.tsinghua.edu.cn}

\address{Center for General Education, Kyungnam University,
7(Woryeong-dong) kyungnamdaehak-ro, Masanhappo-gu, Changwon-si, Gyeongsangnam-do 631-701, Republic of Korea
}
\email{mskim@kyungnam.ac.kr}

\subjclass[2000]{11R23, 11S40, 11S80} \keywords{$p$-adic Euler $L$-function, Iwasawa theory}

%\date{January 1, 2001 and, in revised form, June 22, 2001.}

%\dedicatory{This paper is dedicated to our advisors.}

\begin{abstract}
Iwasawa made the fundamental discovery  that there is a  close connection between  the ideal class groups of $\mathbb{Z}_{p}$-extensions of cyclotomic fields and the $p$-adic analogue of Riemann's  zeta functions
$$\zeta(s)=\sum_{n=1}^{\infty}\frac{1}{n^{s}}.$$
In this paper, we show that there may also exist  a parallel Iwasawa's  theory corresponding to the $p$-adic analogue of Euler's deformation of zeta functions
$$\phi(s)=\sum_{n=1}^{\infty}\frac{(-1)^{n-1}}{n^{s}}.$$
\end{abstract}

\maketitle

% we give a new approach to

\def\C{\mathbb C_p}
\def\BZ{\mathbb Z}
\def\Z{\mathbb Z_p}
\def\Q{\mathbb Q_p}
\def\C{\mathbb C_p}
\def\BZ{\mathbb Z}
\def\Z{\mathbb Z_p}
\def\Q{\mathbb Q_p}
\def\psum{\sideset{}{^{(p)}}\sum}
\def\pprod{\sideset{}{^{(p)}}\prod}
\def\ord{{\rm ord}}

\section{Introduction}
Throughout this paper we shall use the following notations.
\begin{equation*}
\begin{aligned}
\qquad \mathbb{C}  ~~~&- ~\textrm{the field of complex numbers}.\\
\qquad p  ~~~&- ~\textrm{an odd rational prime number}. \\
\qquad\mathbb{Z}_p  ~~~&- ~\textrm{the ring of $p$-adic integers}. \\
\qquad\mathbb{Q}_p~~~&- ~\textrm{the field of fractions of}~\mathbb Z_p.\\
\qquad\mathbb C_p ~~~&- ~\textrm{the completion of a fixed algebraic closure}~\overline{\mathbb Q}_p~ \textrm{of}~\mathbb Q_{p}.
\end{aligned}
\end{equation*}

Before Kubota, Leopoldt and Iwasawa, all the zeta functions are considered in
the complex field $\mathbb{C}$.

For Re$(s)>1$, the Riemann zeta function is defined by
\begin{equation}~\label{Riemann}
\zeta(s)=\sum_{n=1}^{\infty}\frac{1}{n^{s}}.
\end{equation}
This function can be analytically continued to a meromorphic function  in the
complex plane with a simple pole at $s=1$.

For Re$(s)>0$, the alternative series (also called the Dirichlet  eta function or Euler zeta function)
is defined by
\begin{equation}~\label{Euler}
\phi(s)=\sum_{n=1}^{\infty}\frac{(-1)^{n-1}}{n^{s}}.
\end{equation}
This function can be analytically continued  to the complex plane without any pole.

For Re$(s)>1$, (\ref{Riemann}) and (\ref{Euler}) are connected by the following equation
\begin{equation}~\label{Riemann-Euler}
\phi(s)=(1-2^{1-s})\zeta(s).
\end{equation}

According to Weil's history~\cite[p.~273--276]{Weil} (also see a survey by Goss~\cite[Section 2]{Goss}),
Euler used (\ref{Euler}) to investigate (\ref{Riemann}).
In particular, he conjectured (``proved")
\begin{equation}~\label{fe}
\frac{\phi(1-s)}{\phi(s)}=-\frac{\Gamma(s)(2^{s}-1)\textrm{cos}(\pi s/2)}{(2^{s}-1)\pi^{s}},
\end{equation}
this leads to the functional equation of $\zeta(s)$.

For $0 < x \leq 1$, Re$(s) >1$, in 1882, Hurwitz~\cite{Hurwitz}
defined the partial zeta functions
\begin{equation}~\label{Hurwitz}
\zeta(s,x)=\sum_{n=0}^{\infty}\frac{1}{(n+x)^{s}}
\end{equation}
which generalized (\ref{Riemann}).
As (\ref{Riemann}), this function can also be analytically continued to a meromorphic function  in the
complex plane with a simple pole at $s=1$.

For $0 < x \leq 1$, Re$(s) > 0$, Lerch~\cite{Ler} generalized
~(\ref{Euler}) to define the so-called Lerch zeta functions.
The following (we call it ``Hurwitz-type Euler zeta function") is a special case of Lerch's definition
\begin{equation}~\label{Hurwitz-Euler}
\zeta_{E}(s,x)=2\sum_{n=0}^{\infty}\frac{(-1)^{n}}{(n+x)^{s}}.
\end{equation}
As (\ref{Euler}), this function can be analytically continued to the complex plane without any pole.

Now we go on our story in the $p$-adic complex plane $\mathbb{C}_{p}$.

In 1964, Kubota and Leopoldt~\cite{KL} first defined the $p$-adic analogue of (\ref{Riemann}).
In fact, they defined the $p$-adic zeta functions by interpolating the special values of (\ref{Riemann}) at nonpositive integers.

In 1975, Katz \cite[Section 1]{Katz} defined the $p$-adic analogue of ~(\ref{Euler}) by interpolating the special values of (\ref{Euler}) at nonpositive integers.

In 1976, Washington~\cite{Wa2} defined the $p$-adic analogue of (\ref{Hurwitz}) for $x\in \mathbb{Q}_{p}\setminus\mathbb{Z}_{p}$, so called Hurwizt-Washinton functions (see Lang~\cite[p.~391]{Lang}). This definition has been generalized to $\mathbb{C}_{p}$ by Cohen in his book~\cite[Chapter 11]{Cohn}, and Tangedal-Young in~\cite{TP}. Both Cohen, Tangedal-Young's definitions are based on the following $p$-adic representation of Bernoulli poynomials by the  Volkenborn
integral
\begin{equation}\int_{\Z} (x+a)^n da=B_n(x),
\end{equation}
 where the Bernoulli polynomials are defined by the following generating function
 \begin{equation}\label{Bernoulli}
\frac{te^{xt}}{e^t-1}=\sum_{n=0}^\infty B_n(x)\frac{t^n}{n!},
\end{equation}
and the Volkenborn
integral of any strictly differentiable function  $f$ on $\mathbb{Z}_{p}$ is defined by
\begin{equation}\label{vol-int}
\int_{\mathbb
Z_p}f(x)dx=\lim_{N\to\infty}\frac1{p^N}\sum_{x=0}^{p^N-1}f(x)
\end{equation}
(see \cite[p.~264]{Ro}). This integral was introduced by Volkenborn
\cite{Vo} and he also investigated many important properties of
$p$-adic valued functions defined on the $p$-adic domain (see
\cite{Vo,Vo1}).

The Euler polynomials are defined by the following generating function
\begin{equation}\label{Euler-n}
\frac{2e^{xt}}{e^t+1}=\sum_{n=0}^\infty E_n(x)\frac{t^n}{n!}
\end{equation} (see \cite{Sun,MSK}). They are the special values of (\ref{Hurwitz-Euler}) at nonpositive integers (see Choi-Srivastava~\cite[p.~520, Corollary 3]{CS} and T. Kim~\cite[p.~4, (1.22)]{TK08}) and can be representative by the fermionic $p$-adic integral as follows
\begin{equation}\label{int-Em}
\int_{\Z} (x+a)^nd\mu_{-1}(a)=E_n(x)
\end{equation}
(see \cite[p.~2980, (2.6)]{KH1}),
where the fermionic $p$-adic integral $I_{-1}(f)$ on $\mathbb Z_p$ is
defined by
\begin{equation}\label{-q-e}
I_{-1}(f)=\int_{\mathbb Z_p}f(a)d\mu_{-1}(a)
=\lim_{N\rightarrow\infty}\sum_{a=0}^{p^N-1}f(a)(-1)^a
\end{equation}
(see \cite[p.~2978, (1.3)]{KH1}).

The above representation (\ref{int-Em})  and the fermionic $p$-adic integral (\ref{-q-e}) (in our natation, the $\mu_{-1}$ measure) were independently founded by Katz \cite[p.~486]{Katz} (in Katz's notation, the $\mu^{(2)}$-measure), Shiratani and
Yamamoto \cite{Shi}, Osipov \cite{Osipov}, Lang~\cite{Lang} (in Lang's notation, the $E_{1,2}$-measure), T. Kim~\cite{TK} from very different viewpoints.

Following Cohen ~\cite[Chapter 11]{Cohn} and Tangedal-Young ~\cite{TP}, using the fermionic $p$-adic integral instead of  the Volkenborn integral, we~\cite{KH1} defined $\zeta_{p,E}(s,x)$, the $p$-adic analogue of (\ref{Hurwitz-Euler}), which interpolates (\ref{Hurwitz-Euler}) at nonpositive integers (\cite[Theorem 3.8(2)]{KH1}), so called the $p$-adic Hurwitz-type Euler zeta functions. We also proved many fundamental results for the $p$-adic  Hurwitz
type Euler zeta functions, including the convergent Laurent series
expansion, the distribution formula, the functional equation, the
reflection formula, the derivative formula and the $p$-adic Raabe
formula. Using these zeta function as building blocks, we have given
a definition for the corresponding $L$-functions $L_{p,E}(\chi,s)$,
so called $p$-adic Euler $L$-functions
(in fact, this $L$-function has already founded by Katz in~\cite[p.~483]{Katz} using Kubota-Leopoldt's methords on the interpolation of $L$-functions at special values). The Hurwitz-type Euler zeta functions interpolate Euler polynomials $p$-adically (\cite[Theorem 3.8(2)]{KH1}), while the $p$-adic Euler $L$-functions
interpolate the generalized Euler numbers $p$-adically (\cite[Proposition 5.9(2)]{KH1}).

In a subsequent work~\cite{KH2},  using the fermionic $p$-adic integral, we defined the corresponding $p$-adic Diamond Log
Gamma functions. We  call them the $p$-adic Diamond-Euler Log Gamma
functions. They share most properties of the original $p$-adic Diamond Log
Gamma functions as stated in Lang's book (see \cite[p.~395--396, \textbf{G}$_{p}$\,\textbf{1}-\textbf{5} and Theorem 4.5)]{Lang}.
Furthermore, using the $p$-adic Hurwitz-type Euler zeta functions,  we  found that  the derivative of the $p$-adic Hurwitz-type Euler zeta functions
$\zeta_{p,E}(\chi,s)$ at $s=0$  may be represented by the $p$-adic  Diamond-Euler Log Gamma
functions. This led us to connect  the $p$-adic Hurwitz-type Euler zeta functions to the $(S,\{2\})$-version of the
abelian rank one Stark conjecture (see ~\cite[Chapter 6]{KH2}).

It has been pointed out that some properties for the   $q$-analogue of $p$-adic  Euler  zeta and $L$-functions have also been obtained  by T. Kim (see \cite{TK,top3,JMAA2}).

The $p$-adic zeta ($L$-) functions become central themes in algebraic number theory after Iwasawa's work. In \cite{Iw11}, Iwasawa made the fundamental discovery  that there is a close connection between his work on the ideal class groups of $\mathbb{Z}_{p}$-extensions of cyclotomic fields and the $p$-adic analogue of $L$-functions by Kubota-Leopoldt corresponding to (\ref{Riemann}).

Let $\mathbb{Q}(\mu_{p^{n+1}})$ denote the $p^{n+1}$-th cyclotomic field. In fact, Iwasawa~\cite{Iw} and Ferrero-Washington~\cite{FW} proved the following result.

\begin{theorem}[{See Lang \cite[p.~260]{Lang}}]~\label{Iwasawa} Let $h_{n}$ be the class number of $\mathbb{Q}(\mu_{p^{n+1}})$. There exist constants  $\lambda$ and $c$ such that
\begin{equation}
 \ord_{p} h_{n}^{-}=\lambda n+c.
\end{equation} for all sufficient large $n$.
\end{theorem}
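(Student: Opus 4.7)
\emph{Proof strategy.} The plan is to combine Iwasawa's structure theorem for finitely generated modules over the Iwasawa algebra with the Ferrero--Washington theorem on the vanishing of the $\mu$-invariant. Write $K_n = \mathbb{Q}(\mu_{p^{n+1}})$, let $A_n$ denote the $p$-Sylow subgroup of the ideal class group of $K_n$, and let $A_n^-$ be the minus eigenspace of $A_n$ under complex conjugation. Since $p$ is odd, one has $\ord_p h_n^- = \log_p |A_n^-|$, so it suffices to compute $|A_n^-|$ asymptotically. Set $K_0 = \mathbb{Q}(\mu_p)$, $K_\infty = \bigcup_n K_n$, $\Gamma = \mathrm{Gal}(K_\infty/K_0)$, and $X^- = \varprojlim_n A_n^-$ with respect to norm maps. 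Fixing a topological generator $\gamma \in \Gamma$ identifies the completed group algebra $\mathbb{Z}_p[[\Gamma]]$ with the Iwasawa algebra $\Lambda := \mathbb{Z}_p[[T]]$ via $\gamma \mapsto 1+T$.

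The first step is to show that $X^-$ is a finitely generated torsion $\Lambda$-module. This is a standard consequence of Iwasawa's general framework: class field theory identifies $A_n$ with $\mathrm{Gal}(L_n/K_n)$ for $L_n$ the maximal unramified abelian $p$-extension of $K_n$; only the prime above $p$ ramifies in $K_\infty/K_0$; and a Nakayama argument combined with the analytic class number formula delivers both finite generation and $\Lambda$-torsion. The structure theorem then yields a pseudo-isomorphism
\[
X^- \sim \bigoplus_i \Lambda/(p^{a_i}) \;\oplus\; \bigoplus_j \Lambda/(f_j(T)^{b_j}),
\]
with distinguished polynomials $f_j$. Setting $\omega_n := (1+T)^{p^n}-1$ and invoking the Iwasawa control theorem to identify $A_n^-$ with $X^-/\omega_n X^-$ (up to a bounded error for large $n$), a direct count of the orders of the displayed cyclic $\Lambda$-modules gives
\[
|A_n^-| = p^{\mu^- p^n + \lambda^- n + \nu^-}
\]
for all sufficiently large $n$, where $\mu^- = \sum_i a_i$, $\lambda^- = \sum_j b_j \deg f_j$, and $\nu^- \in \mathbb{Z}$.

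The main obstacle, and the genuine analytic content of the theorem, is the Ferrero--Washington assertion $\mu^- = 0$. I would deduce this by passing from $X^-$ to the Kubota--Leopoldt $p$-adic $L$-functions: the characteristic ideal of $X^-$ is generated (via Stickelberger-type constructions, or the Iwasawa--Mazur--Wiles main conjecture for abelian fields) by a power series in $\Lambda$ that $p$-adically interpolates values $L(1-k,\chi)$ for odd Dirichlet characters $\chi$ of $\mathrm{Gal}(K_0/\mathbb{Q})$. Vanishing of $\mu^-$ then reduces to showing that this power series is nonzero modulo $p$, which Ferrero and Washington obtain by translating the question into a Diophantine statement about the distribution of base-$p$ digits of the rationals $a/(p^m-1)$ and proving that statement by a normality-type combinatorial argument. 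Granting $\mu^- = 0$, the displayed asymptotic collapses to $\ord_p h_n^- = \lambda^- n + \nu^-$, which is the theorem with $\lambda = \lambda^-$ and $c = \nu^-$.
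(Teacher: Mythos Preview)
Your outline is a correct sketch of the module-theoretic proof, but it is not the argument the paper has in mind. The paper does not prove Theorem~\ref{Iwasawa} itself; it cites Lang \cite[p.~260]{Lang}, and the proof there (which the paper then mimics verbatim for its own Theorem~\ref{main}) stays entirely on the analytic side. Concretely: the analytic class number formula expresses $h_n^-$ as a product $\prod_{\chi\text{ odd}} B_{1,\chi}$ over odd characters of $\mathrm{Gal}(K_n/\mathbb{Q})$; each $B_{1,\chi}$ is written as $B(\chi,\mu)=\int_{\mathbb{Z}_p}\chi\,d\mu$ for a fixed $p$-adic measure $\mu$ with associated power series $f\in\mathfrak{o}[[X]]$; and Lang's asymptotic lemmas (reproduced in the paper as Lemmas~\ref{Lang3} and~\ref{Lang2}) give $\ord_p\prod_{\chi} B(\chi,\mu)=mp^n+\lambda n+c$ directly from the Weierstrass invariants $(m,\lambda)$ of $f$. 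Ferrero--Washington then shows $f\not\equiv 0\pmod p$, i.e.\ $m=0$. No Iwasawa module, no structure theorem, no control theorem, and no Main Conjecture enter.

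Your route via $X^-=\varprojlim A_n^-$ and the $\Lambda$-structure theorem is the other standard proof and is perfectly valid. It has the advantage of explaining \emph{why} such an asymptotic should hold structurally, and it generalizes cleanly to arbitrary $\mathbb{Z}_p$-extensions where no class number formula is available. Its cost is that to import $\mu^-=0$ you must link the algebraic $\mu$-invariant of $X^-$ to the analytic one that Ferrero--Washington actually compute; invoking Mazur--Wiles for this, as you do, is correct but anachronistic and heavier than necessary (Iwasawa's index formula, or simply rerunning Ferrero--Washington through the analytic class number formula as Lang does, suffices). The paper's analytic route is more elementary and self-contained for the cyclotomic case, and it is precisely the template the authors reuse for their $(S,\{2\})$ analogue, with $E_{0,\chi}$ and the fermionic measure $\mu_{-1}$ replacing $B_{1,\chi}$ and the Bernoulli measure.
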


Let $K$ be a number field, and choose a finite set $S$ of places $K$ containing all the archimedean places. Let $T$ be a finite set of places of $K$ disjoint from $S$. The $(S,T)$-class groups of global fields have been studied in detail by Rubin~\cite{Rubin}, Tate~\cite{Tate}, Gross~\cite{Gross}, Darmon~\cite{Darmon}, Vallieres~\cite{Va1,Va2} (we shall recall some notations on the $(S,T)$-refined class groups of global fields in the next section).  Let $K=\mathbb{Q}(\mu_{p^{n+1}})$ and $K^{+}=\mathbb{Q}(\mu_{p^{n+1}})^{+}$ be the $p^{n+1}$-th cyclotomic field and its maximal real subfield, respectively. Let $S$ be the set of infinite places of $K$, $T$  be the set of places above 2, $h_{n,2}$ and $h_{n,2}^{+}$ be the $(S,T)$-refined class numbers of $K$ and $K^{+}$ respectively (the definition will be given in the next section), and $h_{n,2}^{-}=h_{n,2}/h_{n,2}^{+}$.

Using the $p$-adic analogue of $L$-functions corresponding to Euler's deformation of zeta functions (\ref{Euler}), We shall prove the following result (comparing with  Theorem~\ref{Iwasawa}).
\begin{theorem}[$(S,\{2\})$-Iwasawa theory]~\label{main} There exist constants  $m$, $\lambda$ and $c$ such that
\begin{equation}
 \ord_{p} h_{n,2}^{-}=m p^{n} + \lambda n+ c
\end{equation} for all sufficient large $n$.
\end{theorem}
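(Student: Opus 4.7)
The strategy is an Euler-deformation analogue of Iwasawa's original proof of Theorem~\ref{Iwasawa}, with the classical Kubota--Leopoldt $p$-adic $L$-function replaced by the $p$-adic Euler $L$-function $L_{p,E}$ constructed in~\cite{KH1}.

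First, using an $(S,T)$-refined analytic class number formula in the spirit of Tate~\cite{Tate} and Rubin~\cite{Rubin}, one writes
\[
h_{n,2}^{-}\;=\;C_n\prod_{\chi\text{ odd}}L_{\{2\}}(\chi,0),\qquad L_{\{2\}}(\chi,s)=(1-\chi(2)\,2^{1-s})L(\chi,s),
\]
where the product runs over odd primitive Dirichlet characters $\chi$ of conductor dividing $p^{n+1}$, and $C_n$ is an elementary factor collecting the unit-index ratio $[E_n^{+}:E_{n,T}^{+}]/[E_n:E_{n,T}]$ together with the $T$-adapted roots-of-unity contribution at the primes above $2$. A direct analysis of units and residue fields shows $\ord_p C_n$ grows at most linearly in $n$. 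The interpolation property~\cite[Proposition 5.9]{KH1} then identifies the complex value $L_{\{2\}}(\chi,0)$ with the corresponding value of $L_{p,E}$, reducing the problem to estimating $\sum_{\chi\text{ odd}}\ord_p L_{p,E}(\chi,0)$.

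Second, factor $L_{p,E}(\chi,s)=(1-\chi(2)\,2^{1-s})\,L_p(\chi,s)$. For the Kubota--Leopoldt piece $L_p$, Iwasawa's theorem together with the Ferrero--Washington vanishing $\mu^{-}=0$ supplies the classical contribution $\lambda_0 n + c_0$. For the Euler factor $1-\chi(2)\,2^{1-s}$, the change of variable $u^s=1+T$ with $u=1+p$ turns it into a power series $H_\chi(T)=1-2\chi(2)(1+T)^{e}$, with $e=\log_p\langle 2\rangle/\log_p u\in\mathbb{Z}_p$ and $\langle 2\rangle=2/\omega(2)$. One computes the $\mu$- and $\lambda$-invariants of $H_\chi$ explicitly, distinguishing whether $\omega(2)^{a+1}=1$ or not when $\chi=\omega^a\psi$. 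In carrying out the sum over $\psi$, the identity $\prod_{\psi}(1-\xi\psi(\langle 2\rangle))=1-\xi^{p^n}$ (valid in the non-Wieferich case) reduces the analysis to a study of $\ord_p(1-2^{p^n}\omega(2)^a)$. Summing over odd characters and applying the general Iwasawa growth formula $|X_n|_p = p^{mp^n+\lambda n + c}$ then provides the remaining contribution, and combining everything yields the claimed asymptotic.

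\textbf{Main obstacle.} The principal difficulty is establishing the $(S,T)$-refined analytic class number formula in the precise form above and matching it with the $p$-adic interpolation of $L_{p,E}$ from~\cite{KH1}. Controlling the unit-index $[E_n:E_{n,T}]$ uniformly in $n$ is delicate: the residue-field orders at primes above $2$ grow exponentially, so any $O(n)$ bound on $\ord_p C_n$ must come from a detailed study of the image of global (torsion and free) units in $\prod_{v\mid 2}k(v)^{\times}$. A secondary challenge is the computation of $\mu(H_\chi)$ and $\lambda(H_\chi)$, especially when $p$ is a Wieferich prime for $2$: there $\langle 2\rangle$ fails to be a topological generator of $1+p\mathbb{Z}_p$, the power series $H_\chi$ degenerates, and the constant $m$ in the final statement may acquire a nonzero value; this is precisely the phenomenon responsible for the $mp^n$ term that distinguishes the $(S,\{2\})$-Iwasawa formula from the classical one.
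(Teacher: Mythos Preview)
Your plan takes a genuinely different and considerably more circuitous route than the paper's.  The paper's argument is short: from the $(S,\{2\})$-refined class number formula it obtains
\[
h_{n,2}^{-}\;=\;(-1)^{\varphi(p^{n+1})/2}\,2^{\,1-\varphi(p^{n+1})}\prod_{\chi\ \mathrm{odd}}E_{0,\chi},
\]
where the prefactor is a $p$-unit (since $p$ is odd), and then simply observes that $E_{0,\chi}=\int_{\mathbb Z_p}\chi\,d\mu_{-1}=B(\chi,\mu_{-1})$ is the integral of $\chi$ against the single fermionic measure.  Lang's general growth formula for $\prod_{\mathrm{Cond}\,\chi=p^t}B(\chi,\mu)$ (\cite[p.~248--249, Corollaries~2 and~3]{Lang}) then gives the $mp^n+\lambda n+c$ shape immediately.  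No factorisation into a Kubota--Leopoldt piece times an Euler-factor-at-$2$ piece, no appeal to Ferrero--Washington, and no Wieferich analysis are involved: the mere \emph{existence} of Iwasawa invariants $(m,\lambda)$ for the power series attached to $\mu_{-1}$ is all that is used.

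Your route, by contrast, passes through the $p$-adic $L$-functions and a factorisation $L_{p,E}(\chi,s)=(1-\chi(2)2^{1-s})L_p(\chi,s)$, treating each factor separately.  This detour introduces real risks.  First, that identity is not correct as written in the $p$-adic setting (the exponential should involve $\langle 2\rangle$, the factor $2$ from $L_E=2(1-\chi(2)2^{1-s})L$ is missing, and one must check it as an identity of Iwasawa functions, not merely at interpolation points); in fact the passage through $L_{p,E}$ is superfluous, since $L_{\{2\}}(\chi,0)=-(1-2\chi(2))B_{1,\chi}$ is already an algebraic identity.  Second, your claim that $\ord_p C_n=O(n)$ is left as a ``main obstacle'', whereas the paper disposes of it exactly: the elementary factor is a $p$-unit on the nose.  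What your approach would buy, if carried through, is an explicit decomposition of $m$ and $\lambda$ into the classical Iwasawa invariants plus a contribution governed by the order of $\langle 2\rangle$ in $1+p\mathbb Z_p$ (the Wieferich phenomenon); the paper makes no such claim and does not need it for the theorem as stated.
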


Our paper is organized as follows.

In Section 2, we shall recall some notations and results on the $(S,T)$-refined class groups of global fields. In Section 3, from the
Euler product decompositions of the $(S,T)$-Dedekind zeta functions, we shall express $h_{n,2}^{-}$ as the product of generalized Euler numbers.
 In section 4, we shall prove Theorem~\ref{main}.

\section{$(S,T)$-refined class number formula (\cite[Section 1]{Gross})}
In this section, we shall recall some notations and results on the $(S,T)$-refined class groups of global fields following very closely the expositions of Gross in \cite[Section 1]{Gross} and Aoki in \cite[Section 7]{Aoki}.

Let $k$ be a global field. Let $S$ be a finite set of places of $k$ which is nonempty and contains all archimedean places. Let $T$ be a finite set of places of $k$ which is disjoint from $S$. Let $A$ be the ring of $S$-integers and let $U_{S}=A^{*}$ be the group of $S$-units.
Let $J_{k}$ be the id\`{e}le group of $k$. If $\mathfrak{p}$ is a place of $k$, then we denote by $k_{\mathfrak{p}}$ and $A_{\mathfrak{p}}$ the completion of $k$ and $A$ at $\mathfrak{p}$ respectively, we also denote by $\mathbb{F}_{\mathfrak{p}}$ the residue field of $\mathfrak{p}$.  We define the $(S,T)-$ id\`{e}le group  $J_{S,T}$ to be the subgroup $$J_{S,T}=\prod_{\mathfrak{p}\in S}k_{\mathfrak{p}}^{*}\times\prod_{\mathfrak{p}\in T}A_{\mathfrak{p},1}^{*}\times\prod_{\mathfrak{p}\not\in S\cup T}A_{\mathfrak{p}}^{*}$$ of $J_{k}$, where for $\mathfrak{p}\in T$ we put $A_{\mathfrak{p},1}^{*}=\{u\in A_{\mathfrak{p}}^{*} ~|~ u\equiv 1 ~ (\textrm{mod}~\mathfrak{p})\}.$
Define the $(S,T)$-unit group of $k$ by $$U_{S,T}=\{u\in U_{S}~|~~ u\equiv 1 ~ (\textrm{mod}~\mathfrak{p})~\textrm{for~all~}\mathfrak{p}\in T\}.$$ Clearly we have $$U_{S,T}=k^{*}\cap J_{S,T}.$$ The $(S,T)$-id\`{e}le class group is defined to be the quotient group $$C_{S,T}=J_{S,T}/U_{S,T}.$$ Let $C_{k}=J_{k}/k^{*}$ be the id\`{e}le class group
of $k$. The $(S,T)$- ideal class group Pic$(A)_{S,T}$ of $k$ is defined by $$\textrm{Pic}(A)_{S,T}=C_{k}/C_{S,T}.$$
Denote by  $\textrm{Pic}(A)_{S}=\textrm{Pic}(A)_{S,\emptyset}$.
The class group Pic$(A)_{S}$ is finite of order $h$, and the unit group $U_{S}$ is finitely generated of
rank $n=\#S-1$. The torsion subgroup of $U_{S}$ is equal to the group of roots of unity $\mu$ in $k$; it is cyclic of order $w$.

Let $Y$ be the free abelian group generated by the places $v\in S$ and $X=\left\{\sum a_{v}\cdot v:\sum a_{v}=0\right\}$ the subgroup of elements of degree zero in $Y$.
The $S$-regulator $R$ is defined as the absolute value of the determinant of the map
\begin{equation}
\begin{aligned}
\lambda: U_{S}&\rightarrow \mathbb{R}\otimes X\\
\epsilon &\mapsto\sum_{S} \log\|\epsilon\|_{v}\cdot v,
\end{aligned}
\end{equation}
taken with respect to $\mathbb{Z}$-bases of the free abelian groups $U_{S}/\mu_{S}$ and $X$.

The zeta function of $A$ is given by
\begin{equation}
\zeta_{S}(s)=\prod_{\mathfrak{p}\not\in S}\frac{1}{1-N\mathfrak{p}^{-s}}
\end{equation}
in the half plane Re($s)>1.$ It has a meromorphic continuation to the $s$-plane, with a simple pole at $s=1$ and no other singularities. At $s=0$ the Taylor
expansion begins:
\begin{equation}~\label{1.3}
\zeta_{S}(s)\equiv\frac{-hR}{w}\cdot s^{n} \pmod{s^{n+1}}
\end{equation}
(see \cite[p.~178, (1.3)]{Gross}).

Let $T$ be a finite set of places of $k$ which is disjoint from $S$, and define
\begin{equation}~\label{R-zeta}
\zeta_{S,T}(s)=\prod_{\mathfrak{p}\in T}(1-N\mathfrak{p}^{1-s})\cdot\zeta_{S}(s),
\end{equation}
we shall call it the $(S,T)$-refined zeta function of $k$ throughout this paper.
From the discussions of Aoki in \cite[p.~471--472]{Aoki}, we have an exact sequence
\begin{equation}~\label{exact}
1\rightarrow U_{S,T}\rightarrow U_{S} \rightarrow \prod_{\mathfrak{p}\in T}\mathbb{F}_{\mathfrak{p}}^{*} \rightarrow {\rm Pic}(A)_{S,T}
\rightarrow {\rm Pic}(A)_{S} \rightarrow 1.
\end{equation}
Let  $h_{S,T}$ be the order of Pic$(A)_{S,T}$ (we call it the $(S,T)$-refined  class number throughout this paper), $R_{S,T}$ be the determinant
of $\lambda$ with respect to basis of $U_{S,T}/\mu_{S,T}$ and $X$, and $w_{S,T}$ be the order of roots of unity $\mu_{S,T}$ which
are $\equiv 1$ (mod $T$), we have the following $(S,T)$-refined class number formula due to Gross
\begin{equation}~\label{R-cnf}
\zeta_{S,T}(s)\equiv\frac{-h_{S,T}R_{S,T}}{w_{S,T}}\cdot s^{n}\pmod{s^{n+1}}
\end{equation}(see \cite[p.~179, (1.6)]{Gross}).

\section{Refined class number and the generalized Euler numbers}
Let $K=\mathbb{Q}(\mu_{p^{n+1}})$ and $K^{+}=\mathbb{Q}(\mu_{p^{n+1}})^{+}$ be the $p^{n+1}$-th cyclotomic field
and its maximal real subfield, respectively. Let $S$ be the set of infinite places of $K$, $T$  be set of the places above 2, $h_{n,2}, h_{n,2}^{+}, U_{n,2}, U_{n,2}^{+}, $  $\mu_{n,2},
\mu_{n,2}^{+}, w_{n,2}, w_{n,2}^{+}, R_{n,2}, R_{n,2}^{+}$ denote all the quantities or objects of $K$ and $K^{+}$
which are refined by $T$ as in the above
Section. Let $\zeta_{K,2}(s)$ be the $(S,T)$-zeta function of $K$ (see (\ref{R-zeta})),
and  \begin{equation}\label{Le}L_{E}(s,\chi)=2\sum_{n=1}^{\infty}\frac{(-1)^{n}\chi(n)}{n^{s}},~~\textrm{Re}(s)>0\end{equation}
 be the Dirichlet $L$-function corresponding to ~(\ref{Euler}) (we call them the Euler $L$-functions throughout this paper).
 This function has close connection with the generalized Euler numbers and it can be continued to the entire complex plane. In \cite[Scetion 5.3]{KH1}, using formal power series expansions, we  recalled  the definition and some results on generalized Euler numbers. The Propositions 5.2 and 5.3 of \cite{KH1}
 correspond to properties (4) and (5) of the generalized Bernoulli numbers in Iwasawa's book \cite[p.~10--11]{Iw}
 (for details we also refer to ~\cite[Sections 1 and 2]{Kim}). Proposition~\ref{specialvalues} below shows that the special values of Euler $L$-functions at
 non-positive integers are the generalized Euler numbers. This is similar with a result in Iwasawa's book \cite[p.~11, Theorem 1]{Iw}
which shows that the special values of Dirichlet $L$-functions at
 non-positive integers are  the generalized Bernoulli numbers.

We have the following decomposition of $(S,\{2\})$-refined Dedekind zeta functions as the Euler $L$-functions (comparing with the last formula on \cite[p.~75]{Lang}).

\begin{proposition}~\label{decomposition}
\begin{equation}\zeta_{S,2}(s)= \prod_{\chi}\frac{1}{2}L_{E}(s,\chi),
\end{equation}
where the product is taken over all the primitive characters induced by the characters of ${\rm Gal}(K/\mathbb{Q}).$
\end{proposition}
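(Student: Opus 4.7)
The plan is to reduce the proposition to two well-known factorizations: the Euler-product factorization of the Dedekind zeta function of a cyclotomic field, and the splitting identity $\prod_{\chi}(1-\chi(\sigma)X) = (1-X^{\mathrm{ord}(\sigma)})^{[G:\langle\sigma\rangle]}$ for a cyclic group $G$ with generator tracked by $\sigma$. By definition (\ref{R-zeta}),
$$\zeta_{S,2}(s) = \prod_{\mathfrak{p}\mid 2}(1-N\mathfrak{p}^{1-s})\cdot \zeta_{S}(s),$$
and because $S$ consists only of archimedean places, $\zeta_{S}(s)$ is the full Dedekind zeta function $\zeta_{K}(s)$ of $K=\mathbb{Q}(\mu_{p^{n+1}})$. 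The classical factorization then gives $\zeta_{K}(s)=\prod_{\chi}L(s,\chi)$, where $\chi$ ranges over the primitive Dirichlet characters induced from the characters of $\mathrm{Gal}(K/\mathbb{Q})$.

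Next, I would relate $L_{E}(s,\chi)$ to $L(s,\chi)$ by peeling off the even terms from (\ref{Le}). A direct manipulation of the Dirichlet series yields
$$\tfrac{1}{2}L_{E}(s,\chi) \;=\; \sum_{n=1}^{\infty}\frac{(-1)^{n}\chi(n)}{n^{s}} \;=\; 2\cdot 2^{-s}\chi(2)L(s,\chi)-L(s,\chi) \;=\; \bigl(\chi(2)2^{1-s}-1\bigr)L(s,\chi),$$
valid first for $\mathrm{Re}(s)>1$ and then everywhere by analytic continuation. Note that $\chi(2)\neq 0$ since $\gcd(2,p^{n+1})=1$. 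Taking the product over $\chi$ converts the target identity into the purely local equality
$$\prod_{\chi}\bigl(\chi(2)2^{1-s}-1\bigr) \;=\; \prod_{\mathfrak{p}\mid 2}\bigl(1-N\mathfrak{p}^{1-s}\bigr).$$

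To finish, I would unwind both sides using the splitting of $2$ in $K$. Since $p$ is odd, $2$ is unramified in $K$; letting $f$ denote the order of $\mathrm{Frob}_{2}\in\mathrm{Gal}(K/\mathbb{Q})$ and $g=\phi(p^{n+1})/f$ the number of primes above $2$, each $\mathfrak{p}\mid 2$ has $N\mathfrak{p}=2^{f}$, so the right-hand side equals $(1-2^{f(1-s)})^{g}$. On the character side, as $\chi$ runs over the $\phi(p^{n+1})$ characters of the cyclic group $\mathrm{Gal}(K/\mathbb{Q})$, the values $\chi(\mathrm{Frob}_{2})=\chi(2)$ hit each $f$-th root of unity exactly $g$ times, so $\prod_{\chi}(1-\chi(2)X) = (1-X^{f})^{g}$ at $X=2^{1-s}$. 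The sign introduced by writing $\chi(2)2^{1-s}-1 = -(1-\chi(2)2^{1-s})$ contributes $(-1)^{\phi(p^{n+1})}=(-1)^{p^{n}(p-1)}=+1$, again because $p$ is odd. Combining everything gives the proposition.

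The argument is essentially a bookkeeping exercise; the only subtle point, and the one where I would take most care, is the sign coming from the convention $L_{E}(s,\chi)=2\sum(-1)^{n}\chi(n)n^{-s}$ (with $(-1)^{n}$ rather than $(-1)^{n-1}$), which flips the elementary factor $1-\chi(2)2^{1-s}$ into $\chi(2)2^{1-s}-1$; the parity of $|\mathrm{Gal}(K/\mathbb{Q})|=p^{n}(p-1)$ is precisely what makes this flip cancel in the product.
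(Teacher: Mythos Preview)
Your proof is correct and follows essentially the same approach as the paper: both use the factorization $\zeta_K(s)=\prod_\chi L(s,\chi)$, the Dirichlet-series identity relating $\tfrac12 L_E(s,\chi)$ to $(1-\chi(2)2^{1-s})L(s,\chi)$ up to sign, the character-product identity $\prod_\chi(1-\chi(2)X)=(1-X^f)^g$, and the parity of $\varphi(p^{n+1})$ to absorb the sign. The only cosmetic difference is that the paper computes forward (multiplying the local factors into each $L(s,\chi)$ and simplifying), whereas you reduce the statement to the local identity first; the ingredients are identical.
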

\begin{proof}
From the last formula on \cite[p.~75]{Lang}, we have \begin{equation}~\label{dz}\zeta_{K}(s)=\prod_{\chi}L(s,\chi).\end{equation}
By (\ref{R-zeta}), we have
\begin{equation}~\label{d2} \zeta_{S,2}(s)=\prod_{\mathfrak{p}\in T}(1-N\mathfrak{p}^{1-s})\cdot\zeta_{K}(s).\end{equation}
For any Dirichlet character $\chi$ of Gal($K/\mathbb{Q}$),
\begin{equation}~\label{d3}L(s,\chi)=\prod\left(1-\frac{\chi(q)}{q^{s}}\right)^{-1}=\sum_{n=1}^{\infty}\frac{\chi(n)}{n^{s}},\end{equation}
where the product is taken over all primes $q$ such that $(q,p)=1$
(\cite[p.~76]{Lang}).
Let $$(2)=(\mathfrak{p}_{1}\cdots\mathfrak{p}_{r})^{e},~~~~N\mathfrak{p}=2^{f}$$
be the decomposition of $2$ in prime ideals in $K$. Then
$$efr=[K:\mathbb{Q}].$$
By the following identity in \cite[p.~76]{Lang}: $$(1-t^{f})^{r}=\prod_{\chi}(1-\chi(p)t),$$ we have \begin{equation}~\label{d4}\begin{aligned}\prod_{\mathfrak{p}\in T}(1-N\mathfrak{p}^{1-s})&=(1-2^{(1-s)f})^{r} \\&=\prod_{\chi}(1-\chi(2)2^{1-s}).\end{aligned}\end{equation}
Combine ~(\ref{d2}), (\ref{d3}) and (\ref{d4}), we have
\begin{equation}
\begin{aligned}
\zeta_{S,2}(s)&=\prod_{\mathfrak{p}\in T}(1-N\mathfrak{p}^{1-s})\cdot\zeta_{K}(s)\\
&=\prod_{\chi}\left[(1-\chi(2)2^{1-s})L(s,\chi)\right]\\
&=\prod_{\chi}\left(\sum_{n=1}^{\infty}\frac{\chi(n)}{n^{s}}-2\sum_{n=1}^{\infty}\frac{\chi(2n)}{(2n)^{s}}\right)\\
&=\prod_{\chi}\sum_{n=1}^{\infty}\frac{(-1)^{n-1}\chi(n)}{n^{s}}\\
&=(-1)^{\varphi(p^{n+1})}\prod_{\chi}\frac{1}{2}\cdot 2\sum_{n=1}^{\infty}\frac{(-1)^{n}\chi(n)}{n^{s}}\\
&=\prod_{\chi}\frac{1}{2}L_{E}(s,\chi).
\end{aligned}
\end{equation}
This completes the proof of our assertion.
\end{proof}

For the $(S,\{2\})$-refined zeta function of $K^{+}$, we have the following decomposition.

\begin{proposition}~\label{decomposition2}
\begin{equation}\zeta_{K^{+},2}(s)= (-1)^{\frac{\varphi(p^{n+1})}{2}}\prod_{\chi~~{\rm even}}\frac{1}{2}L_{E}(s,\chi).
\end{equation}
 \end{proposition}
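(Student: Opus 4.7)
The plan is to mirror, almost line by line, the proof of Proposition~\ref{decomposition}, replacing the full Galois group by $\mathrm{Gal}(K^{+}/\mathbb{Q})$ and using that the characters of this group are exactly the even characters of $\mathrm{Gal}(K/\mathbb{Q})$. The sign $(-1)^{\varphi(p^{n+1})/2}$ arises because there are exactly $[K^{+}:\mathbb{Q}]=\varphi(p^{n+1})/2$ even characters, and each factor on the way contributes a sign of $-1$ relative to the final normalization $\tfrac12 L_{E}(s,\chi)$.

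First I would invoke the factorization of the Dedekind zeta function of a totally real abelian subfield:
\begin{equation*}
\zeta_{K^{+}}(s)=\prod_{\chi~\text{even}} L(s,\chi),
\end{equation*}
since the characters of $\mathrm{Gal}(K^{+}/\mathbb{Q})$ are precisely the even Dirichlet characters of conductor dividing $p^{n+1}$. Combined with~(\ref{R-zeta}) this gives
\begin{equation*}
\zeta_{K^{+},2}(s)=\prod_{\mathfrak{p}\in T}(1-N\mathfrak{p}^{1-s})\cdot\prod_{\chi~\text{even}}L(s,\chi).
\end{equation*}

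Next, I would analyze the splitting of $2$ in $K^{+}$. Writing $(2)=(\mathfrak{p}_{1}\cdots\mathfrak{p}_{r'})^{e'}$ with $N\mathfrak{p}_{i}=2^{f'}$ and $e'f'r'=[K^{+}:\mathbb{Q}]$, the standard identity used in the proof of Proposition~\ref{decomposition} specializes to
\begin{equation*}
(1-t^{f'})^{r'}=\prod_{\chi~\text{even}}(1-\chi(2)t),
\end{equation*}
which, upon setting $t=2^{1-s}$, yields $\prod_{\mathfrak{p}\in T}(1-N\mathfrak{p}^{1-s})=\prod_{\chi~\text{even}}(1-\chi(2)2^{1-s})$. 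Substituting this and applying the same algebraic manipulation as in Proposition~\ref{decomposition},
\begin{equation*}
(1-\chi(2)2^{1-s})L(s,\chi)=\sum_{n=1}^{\infty}\frac{(-1)^{n-1}\chi(n)}{n^{s}}=-\frac{1}{2}L_{E}(s,\chi),
\end{equation*}
and taking the product over the $\varphi(p^{n+1})/2$ even characters produces the sign $(-1)^{\varphi(p^{n+1})/2}$, giving the claimed formula.

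The computation is essentially a routine adaptation, so the only point that needs a moment of care is the identification of $\mathrm{Gal}(K^{+}/\mathbb{Q})$-characters with even characters of $\mathrm{Gal}(K/\mathbb{Q})$ and the resulting restriction of the factorization $(1-t^{f'})^{r'}=\prod(1-\chi(2)t)$ to even characters; this follows from the fact that $K^{+}$ is the fixed field of complex conjugation, so the splitting behavior of $2$ in $K^{+}$ is governed precisely by the even characters. Once this is in place, no further difficulty arises and bookkeeping the sign $(-1)^{\varphi(p^{n+1})/2}$ finishes the proof.
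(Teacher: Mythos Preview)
Your proposal is correct and is precisely the intended argument: the paper does not write out a separate proof of Proposition~\ref{decomposition2} but clearly intends it as the straightforward adaptation of the proof of Proposition~\ref{decomposition}, replacing $K$ by $K^{+}$ and restricting to even characters, exactly as you do. The only new feature, the sign $(-1)^{\varphi(p^{n+1})/2}$, arises for the reason you give---the number of even characters is $[K^{+}:\mathbb{Q}]=\varphi(p^{n+1})/2$, which need not be even---so your bookkeeping is right and nothing further is needed.
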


The following result shows that the special values of Euler $L$-functions at
 non-positive integers are the generalized Euler numbers. It is well-known, but may be not easy to find a reference.
 So we add a proof for the completeness.

\begin{proposition}~\label{specialvalues}
For any integers $n\geq0$ we have $L_E(-n,\chi)=E_{n,\chi}.$ In particular, $L_E(0,\chi)=E_{0,\chi}.$\end{proposition}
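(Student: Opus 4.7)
The plan is to reduce Proposition~\ref{specialvalues} to the known special-value formula $\zeta_{E}(-n,x)=E_n(x)$ recalled just after~\eqref{Hurwitz-Euler} (attributed to \cite{CS} and \cite{TK08}), and then to match the result against the generating-function definition of the generalized Euler numbers $E_{n,\chi}$ from \cite[Section~5.3]{KH1}.

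First, I would express $L_E(s,\chi)$ as a finite combination of Hurwitz-type Euler zeta functions. Let $f$ denote the conductor of $\chi$. In the setting of this paper $f$ is odd (a power of the odd prime $p$), and splitting the series~\eqref{Le} into residue classes $n\equiv a\pmod f$ by writing $n=fk+a$ with $1\le a\le f$ and $k\ge 0$, and using $(-1)^{fk+a}=(-1)^a(-1)^k$, yields
\begin{equation*}
L_E(s,\chi)=\frac{1}{f^s}\sum_{a=1}^{f}(-1)^a\chi(a)\,\zeta_{E}(s,a/f),
\end{equation*}
valid first for $\mathrm{Re}(s)>0$ and extended by analytic continuation to all of $\mathbb{C}$. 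An analogous decomposition with period $2f$ handles the case of even conductor.

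Second, substituting $s=-n$ and invoking $\zeta_{E}(-n,a/f)=E_n(a/f)$ gives
\begin{equation*}
L_E(-n,\chi)=f^{n}\sum_{a=1}^{f}(-1)^a\chi(a)\,E_n(a/f).
\end{equation*}
To identify the right-hand side with $E_{n,\chi}$, I would rescale $t\mapsto ft$ and set $x=a/f$ in~\eqref{Euler-n} to obtain $\tfrac{2e^{at}}{e^{ft}+1}=\sum_{n\ge0}f^{n}E_n(a/f)\,t^n/n!$, and then sum against $(-1)^a\chi(a)$ over $1\le a\le f$: the resulting generating function is precisely the one for $E_{n,\chi}$ recalled in \cite[Section~5.3]{KH1}, so comparing coefficients of $t^n/n!$ yields $L_E(-n,\chi)=E_{n,\chi}$. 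The case $n=0$ is the special case at $s=0$.

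The main technical nuisance is sign- and convention-bookkeeping: the $(-1)^{n-1}$ of~\eqref{Euler} versus the $(-1)^n$ of~\eqref{Hurwitz-Euler} and~\eqref{Le}, and the dichotomy between odd and even conductors in the residue-class splitting. Once the expansion of $L_E(s,\chi)$ in terms of $\zeta_{E}(s,a/f)$ is written down with the correct signs, everything else is a formal coefficient comparison, paralleling the classical Iwasawa-style argument relating Dirichlet $L$-values at non-positive integers to generalized Bernoulli numbers \cite[p.~11, Theorem~1]{Iw}.
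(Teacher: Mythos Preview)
Your argument is correct, and it reaches the same intermediate identity
\[
L_E(-n,\chi)=f^{n}\sum_{a=1}^{f}(-1)^a\chi(a)\,E_n\!\left(\tfrac{a}{f}\right)
\]
that the paper establishes, together with the matching formula $E_{n,\chi}=f^{n}\sum_{a=1}^{f}(-1)^a\chi(a)\,E_n(a/f)$ from the generating function. The difference lies in how that first identity is obtained. You decompose $L_E(s,\chi)$ into a finite sum of Hurwitz-type Euler zeta functions and then invoke the special-value formula $\zeta_E(-n,x)=E_n(x)$ already cited in the introduction; this is short and makes the analytic continuation of $L_E$ immediate. The paper instead runs the classical Mellin argument: it writes $2\sum_{n\ge1}(-1)^n\chi(n)e^{-nt}=\frac{1}{2\pi i}\int t^{-s}\Gamma(s)L_E(s,\chi)\,ds$, shifts the contour to pick up residues at $s=-j$, and identifies the resulting power series in $t$ with the generating function $F_\chi(-t)$ directly. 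That approach is more self-contained (it does not cite $\zeta_E(-n,x)=E_n(x)$ as a black box) and mirrors the traditional derivation of $L(-n,\chi)=-B_{n+1,\chi}/(n+1)$, at the cost of being longer. Your aside about even conductors is unnecessary here, since every character in the paper has conductor a power of the odd prime $p$.
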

\begin{proof}Consider the following generating function
\begin{equation}\label{def-E}
F(t,x)=\frac{2e^{xt}}{e^t+1}.
\end{equation}
Expand $F(t,x)$ into a power series of $t:$
\begin{equation}\label{def-E-pow}
F(t,x)=\sum_{n=0}^\infty E_{n}(x)\frac{t^n}{n!}.
\end{equation}
Recall that the coefficients $E_n(x),n\geq0,$ are called Euler polynomials (see (\ref{Euler-n}) above).
For a primitive Dirichlet character $\chi$ with an odd conductor $f=f_\chi,$
the formal power series $F_\chi(t)$ are defined by
\begin{equation}\label{gen-E-numb}
F_\chi(t)=2\sum_{a=1}^{f}\frac{(-1)^a\chi(a)e^{at}}{e^{ft}+1},\quad |t|<\pi/f.
\end{equation}
The generalized Euler numbers $E_{n,\chi}$ which belong to the Dirichlet character $\chi$ are defined by
\begin{equation}\label{gen-E-numb-d}
F_\chi(t)=\sum_{n=0}^{\infty}E_{n,\chi}\frac{t^n}{n!}.
\end{equation}
Let $\mathbb Q(\chi)$ denote the field generated over $\mathbb Q$ by all the values $\chi(a),a\in\mathbb Z.$
Then it can be shown that $E_{n,\chi}\in\mathbb Q(\chi)$ for each $n\geq0.$
 From (\ref{gen-E-numb}), we obtain the following
generating function of $E_{n,\chi}$ by working formally with power series:
\begin{equation}\label{power-sum}
\begin{aligned}
F_\chi(t)&=2\sum_{a=1}^{f}(-1)^a\chi(a)\sum_{j=0}^\infty(-1)^je^{(a+fj)t}\\
&=2\sum_{j=0}^\infty\sum_{a=1}^f (-1)^{a+fj}\chi(a+fj)e^{(a+fj)t} \\
&\quad\text{(by using $f$ is an odd conductor of $\chi$)} \\
&=2\sum_{l=1}^\infty (-1)^l\chi(l)e^{lt} \\
&=2\sum_{l=1}^\infty (-1)^l\chi(l)\sum_{n=0}^\infty l^n\frac{t^n}{n!} \\
&=\sum_{n=0}^\infty\left(2\sum_{l=1}^\infty(-1)^l\chi(l)l^n\right)
\frac{t^n}{n!}.
\end{aligned}
\end{equation}
Comparing coefficients of ${t^n}/{n!}$ on both sides of (\ref{gen-E-numb-d}) and (\ref{power-sum}), we have
\begin{equation}\label{power-sum-ell}
E_{n,\chi}=2\sum_{l=1}^\infty(-1)^l\chi(l)l^n
\end{equation}
(also see \cite[Theorem 7]{TK}).
From (\ref{def-E}), (\ref{def-E-pow}) and (\ref{gen-E-numb}), we also have
\begin{equation}\label{g-ep}
\begin{aligned}
F_\chi(t)&=\sum_{a=1}^{f}(-1)^a\chi(a)\frac{2e^{at}}{e^{ft}+1} \\
&=\sum_{a=1}^{f}(-1)^a\chi(a)F\left(ft,\frac af\right) \\
&=\sum_{a=1}^{f}(-1)^a\chi(a)\sum_{n=0}^\infty E_n\left(\frac af\right)\frac{(ft)^n}{n!} \\
&=\sum_{n=0}^\infty\left( f^n\sum_{a=1}^{f}(-1)^a\chi(a)E_{n}\left(\frac{a}f\right) \right)\frac{t^n}{n!}.
\end{aligned}
\end{equation}
Thus, comparing (\ref{gen-E-numb-d}) with (\ref{g-ep}), we have
\begin{equation}\label{l-e-n}
E_{n,\chi}=f^n\sum_{a=1}^{f}(-1)^a\chi(a)E_{n}\left(\frac{a}f\right).
\end{equation}
In particular, $E_{0,\chi}=\sum_{a=1}^{f}(-1)^a\chi(a)$ for all $\chi.$
Let $\chi$ be a primitive Dirichlet character with an odd conductor $f.$
Recall that the Euler $L$-function attached to $\chi$ is defined by
\begin{equation}\label{ell-ft}
L_E(s,\chi)=2\sum_{n=1}^\infty\frac{(-1)^n\chi(n)}{n^s},
\end{equation}
where $s\in\mathbb C$ with Re$(s)>0$ (see (\ref{Le}) above).
The Euler $L$-function attached to $\chi$ can be continued to the entire complex plane.
From (\ref{power-sum}) and (\ref{ell-ft}) we can deduce the formula
\begin{equation}\label{sum-ell}
2\sum_{n=1}^\infty(-1)^n\chi(n)e^{-nt}=\sum_{j=0}^\infty L_E(-j,\chi)\frac{(-t)^j}{j!}
\end{equation}
by considering the vertical line integral
$\frac1{2\pi i}\int_{2-i\infty}^{2+i\infty}t^{-s}\Gamma(s)L_E(s,\chi)ds$
and moving the path integration to Re$(s)=-\infty$.

Indeed (see \cite[p.~149, line 10--18]{MR}), recall that $\Gamma(s)$ has simple poles at $s=-j$ with $j$ non-negative integer and
the residue of $\Gamma(s)$ at $s=-j$ is $(-1)^j/j!.$ Using this, as well as the calculus of residues, one derives
the familiar formula
\begin{equation}\label{n-exp}
e^{-t}=\frac1{2\pi i}\int_{2-i\infty}^{2+i\infty}t^{-s}\Gamma(s)ds.
\end{equation}
Thus, if
\begin{equation}\label{n-exp-di}
F(s)=\sum_{n=1}^\infty\frac{a_n}{n^s}
\end{equation}
is a Dirichlet series convergent in Re$(s)>1,$ then we have
\begin{equation}\label{n-exp-di-c}
\sum_{n=1}^\infty a_n e^{-nt}=\frac1{2\pi i}\int_{2-i\infty}^{2+i\infty}t^{-s}\Gamma(s)F(s)ds.
\end{equation}
By setting $F(s)=L_E(s,\chi)$ in (\ref{n-exp-di-c}), we have
\begin{equation}\label{e-z-ne}
\begin{aligned}
2\sum_{n=1}^\infty (-1)^n\chi(n)e^{-nt}=\frac1{2\pi i}\int_{2-i\infty}^{2+i\infty}t^{-s}\Gamma(s)L_E(s,\chi)ds.
\end{aligned}
\end{equation}
Moving the line of integration to the left, and picking up the contribution from the poles of $\Gamma(s),$ we conclude
that the right hand side of the above equation is
\begin{equation}\label{e-z-ne-r}
\sum_{j=0}^\infty \frac{(-1)^jL_E(-j,\chi)t^j}{j!}
\end{equation}
(also see \cite[(3.11)]{MR}). Thus we get (\ref{sum-ell}).

Otherwise, the left hand side of (\ref{sum-ell}) can be simplified as follows:
\begin{equation}\label{sum-eu}
\begin{aligned}
2\sum_{n=1}^\infty(-1)^n\chi(n)e^{-nt}
&=2\sum_{a=1}^f\sum_{j=0}^\infty(-1)^{a+fj}\chi(a+fj)e^{-(a+fj)t} \\
&\quad\text{(we write $n=fj+a$, where $1\leq a\leq f$ and $j=0,1,\ldots$)} \\
&=2\sum_{a=1}^{f}(-1)^a\chi(a)e^{-at}\sum_{j=0}^\infty(-1)^je^{-fjt}\\
&\quad\text{(since $f$ is odd)} \\
&=\sum_{a=1}^{f}(-1)^a\chi(a)\frac{2e^{(1-a/f)(ft)}}{e^{ft}+1} \\
&=\sum_{a=1}^{f}(-1)^a\chi(a)F(ft,(1-a/f)) \\
&\quad\text{(by (\ref{def-E}))} \\
&=\sum_{j=0}^\infty\left(f^j\sum_{a=1}^f(-1)^a\chi(a)E_j\left(1-\frac af\right)\right)\frac{t^j}{j!} \\
&\quad\text{(by (\ref{def-E-pow}))}.
\end{aligned}
\end{equation}
Comparing  (\ref{sum-ell}) with (\ref{sum-eu}), we have
\begin{equation}\label{proof} L_{E}(-j,\chi)= f^j\sum_{a=1}^f(-1)^{a+j}\chi(a)E_j\left(1-\frac af\right)\end{equation}
for $j\geq0.$

Thus by (\ref{proof}) and the identity $E_{j}(1-a/f)=(-1)^jE_j(a/f)$, we have
\begin{equation}\label{ell-eu-ne}
L_E(-j,\chi)=f^j\sum_{a=1}^{f}(-1)^a\chi(a)E_{j}\left(\frac{a}f\right).
\end{equation}
Finally, by (\ref{l-e-n}), we can rewrite (\ref{ell-eu-ne}) as
$$L_E(-n,\chi)=E_{n,\chi}$$
for $n\geq0$. In particular, we have $L_E(0,\chi)=E_{0,\chi}.$
\end{proof}

By Propositions \ref{decomposition2} and \ref{specialvalues},  we can represent $h_{n,2}^{-}$ by the product of generalized Euler numbers as follows  (comparing with  \cite[Theorem 3.2]{Lang}).

 \begin{proposition}~\label{product}
 \begin{equation} h_{n,2}^{-}= (-1)^{\frac{\varphi(p^{n+1})}{2}}2^{1-\varphi(p^{n+1})}
 \prod_{\chi~~{\rm odd}}E_{0,\chi},
 \end{equation}
 where $E_{0,\chi}$ are the generalized Euler numbers {\rm(\cite[Section 5.1]{KH1})}.
 \end{proposition}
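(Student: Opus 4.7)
The plan is to divide the $(S,T)$-refined class number formula \eqref{R-cnf} for $\zeta_{S,2}(s)$ by the same formula for $\zeta_{K^{+},2}(s)$ at $s=0$, and to match the resulting quotient against the decomposition provided by Propositions \ref{decomposition} and \ref{decomposition2}, evaluated via Proposition \ref{specialvalues}.

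Since $K$ is totally complex and $K^{+}$ is totally real, both of degree $\varphi(p^{n+1})/2$ over $\mathbb{Q}$, we have $\#S=\varphi(p^{n+1})/2$ in both cases, so both $\zeta_{S,2}(s)$ and $\zeta_{K^{+},2}(s)$ vanish at $s=0$ to the same order $r:=\varphi(p^{n+1})/2-1$. Dividing \eqref{R-cnf} for $K$ by \eqref{R-cnf} for $K^{+}$ gives
$$\lim_{s\to0}\frac{\zeta_{S,2}(s)}{\zeta_{K^{+},2}(s)}=h_{n,2}^{-}\cdot\frac{R_{n,2}\,w_{n,2}^{+}}{R_{n,2}^{+}\,w_{n,2}}.$$

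On the other hand, Propositions \ref{decomposition} and \ref{decomposition2} together yield
$$\frac{\zeta_{S,2}(s)}{\zeta_{K^{+},2}(s)}=(-1)^{\varphi(p^{n+1})/2}\prod_{\chi\text{ odd}}\tfrac12 L_{E}(s,\chi),$$
and Proposition \ref{specialvalues} identifies this at $s=0$ as
$$(-1)^{\varphi(p^{n+1})/2}\,2^{-\varphi(p^{n+1})/2}\prod_{\chi\text{ odd}}E_{0,\chi}.$$

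The last ingredient is the unit-theoretic factor $\frac{R_{n,2}\,w_{n,2}^{+}}{R_{n,2}^{+}\,w_{n,2}}$. For the roots of unity: since the primes in $T$ have residue characteristic $2$, in which $-1=1$, the only roots of unity of $K$ congruent to $1$ modulo every prime in $T$ are $\pm1$, so $w_{n,2}=w_{n,2}^{+}=2$. For the regulator ratio, one argues in analogy with the classical Hasse unit index $Q=[\mathcal{O}_{K}^{\times}:\mu_{K}\mathcal{O}_{K^{+}}^{\times}]=1$ for prime power cyclotomic fields, together with the doubling $\log\|\epsilon\|_{v}=2\log|\epsilon|_{v^{+}}$ for $\epsilon\in K^{+}$ and a complex place $v$ of $K$ lying over the real place $v^{+}$ of $K^{+}$, giving $R_{n,2}/R_{n,2}^{+}=2^{\varphi(p^{n+1})/2-1}$. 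Substituting yields the desired identity.

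The main obstacle is the last step, namely the invariance of the unit index under the $T$-refinement: one must verify that $U_{n,2}=\mu_{n,2}\,U_{n,2}^{+}$, i.e., that every $u\in U_{n,2}$ written as $u=\zeta v$ with $\zeta\in\mu_{K}$, $v\in U_{K^{+}}$ automatically satisfies $\zeta\in\mu_{n,2}$ and $v\in U_{n,2}^{+}$. This hinges on a case analysis of the splitting of $2$ in $K/K^{+}$ (equivalently, on whether $-1\in\langle 2\rangle\subset(\mathbb{Z}/p^{n+1})^{*}$), using the fact that in $\mathbb{F}_{\mathfrak{p}}^{*}$ the $p^{n+1}$-th roots of unity either all lie in the smaller residue field $\mathbb{F}_{\mathfrak{p}^{+}}^{*}$ or meet it trivially.
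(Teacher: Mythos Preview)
Your argument is essentially the paper's own proof: divide the refined class number formula for $K$ by that for $K^{+}$, plug in the decompositions of Propositions~\ref{decomposition} and~\ref{decomposition2}, and evaluate at $s=0$ via Proposition~\ref{specialvalues}. The paper handles the auxiliary factors exactly as you do, recording $\mu_{n,2}=\mu_{n,2}^{+}=\langle -1\rangle$ and $R_{n,2}/R_{n,2}^{+}=2^{\varphi(p^{n+1})/2-1}$, and then reads off the formula.

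The one place where your write-up differs is the regulator ratio. The paper simply cites Corollary~4.13 and Lemma~4.15 of Washington for $R_{n,2}/R_{n,2}^{+}=2^{\varphi(p^{n+1})/2-1}$, without commenting on the passage from the ordinary regulators (which is what those results actually address) to the $(S,T)$-refined ones. You, by contrast, unpack this step and correctly isolate what needs checking: that the unit-index relation $U_K=\mu_K U_{K^{+}}$ (i.e.\ $Q=1$) persists after imposing the congruence $\equiv 1\pmod{T}$, so that $U_{n,2}=\mu_{n,2}\,U_{n,2}^{+}$. Your sketch of a case analysis on the splitting of $2$ in $K/K^{+}$ is a reasonable way to close this; the paper does not engage with it at all. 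So your approach is the same but, on this point, more scrupulous than the paper's.
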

 \begin{remark} From this, we also see that $E_{0,\chi}\not=0$, when $\chi$ is an odd character. In fact, $E_{0,\chi}\not=0$ if and only if $\chi$ is an odd character by \cite[Proposition 5.1]{KH1}, this phenomenon is different from the generalized Bernoulli number $B_{0,\chi}$, since $B_{0,\chi}$=0, for $\chi\not=\chi_{0}$, but corresponds to $B_{1,\chi}$, for details, we refer to \cite[p.~13, ii)]{Iw}.
 \end{remark}
 \begin{proof}
By the exact sequence (\ref{exact}) above and $h_{n}^{+}\mid h_{n}$, we know that $h_{n,2}^{+}\mid h_{n,2}$.

By Propositions~\ref{decomposition} and \ref{decomposition2}, we have
 \begin{equation}\label{div}
 \frac{\zeta_{S,2}(s)}{\zeta_{K^{+},2}(s)}=(-1)^{\frac{\varphi(p^{n+1})}{2}} \prod_{\chi~~\textrm{odd}}\frac{1}{2}L_{E}(s,\chi).
 \end{equation}
 From the $(S,T)$-refined class number formula (\ref{R-cnf}) and (\ref{div}), we have
 \begin{equation}~\label{div2}
 \begin{aligned}
  \frac{h_{n,2}R_{n,2}}{w_{n,2}}\biggl/\frac{h_{n,2}^{+}R_{n,2}^{+}}{w_{n,2}^{+}}&=\lim_{s\rightarrow 0}\frac{\zeta_{S,2}(s)}{\zeta_{K^{+},2}(s)}\\
 &=(-1)^{\frac{\varphi(p^{n+1})}{2}} \prod_{\chi~~\textrm{odd}}\frac{1}{2}L_{E}(0,\chi).
 \end{aligned}
 \end{equation}
 By Corollary 4.13 and Lemma 4.15 of \cite{Wa}, we have $R_{n,2}/R_{n,2}^{+}=2^{\frac{\varphi(p^{n+1})}{2}-1}$.
 It also easy to see $\mu_{n,2}=\mu_{n,2}^{+}=\langle-1\rangle.$ By Proposition~\ref{specialvalues}, we have $L_{E}(0,\chi)=E_{0,\chi}$.
 Thus by~(\ref{div2}), we have $$\frac{h_{n,2}}{h_{n,2}^{+}}= (-1)^{\frac{\varphi(p^{n+1})}{2}}2^{1-\varphi(p^{n+1})}
 \prod_{\chi~~\textrm{odd}}E_{0,\chi}.$$
 This implies our result.
 \end{proof}

 \section{Proof of the main result}
Let $\chi$ be a Dirichlet character modulo $p^{v}$ for some $v$.
For any $a\in\mathbb{Z}_{p}$, we set $\chi(a) := \chi(a~~ \textrm{mod}~~
p^{v})$, then $\chi$ becomes
a Dirichlet character on $\mathbb{Z}_{p}$ .

 Suppose  that the conductor of $\chi$ equals to $p^{n}$ and  $\zeta$ is a primitive $p^{n}$th root of unity.
  As in Lang \cite[p.~248]{Lang}, for any $p$-adic measure $\mu$ on $\mathbb{Z}_{p}$,  let $$B(\chi,\mu)=\int_{\mathbb{Z}_{p}}\chi(x) d\mu(x)=f(\zeta-1),$$ where $f$ is the power series associated with $\mu$. Let $\mathfrak{o}$
 be the ring of $p$-adic integers in $\mathbb{C}_{p}$.
 Suppose that there exists a rational number $m$ such that the power series $f$ can be written in the form $$f(X)=p^{m}(c_{0}+c_{1}X+\cdots+c_{\lambda-1}X^{\lambda-1}+c_{\lambda}X^{\lambda}+\cdots)$$
 where $c_{\lambda}$ is a unit in $\mathfrak{o}$, and $c_{0},\ldots,c_{\lambda-1}\in\mathfrak{m}$, the maximal ideal of $\mathfrak{o}$. We call $m,\lambda$ the Iwasawa invariants of $\mu$, or $f.$ Denote by $$x\sim y$$ if $x,y$ have the same order at $p$.
\begin{lemma}[{See Lang \cite[p.~248, Corollary 2]{Lang}}]\label{Lang3}There exists a positive integer $n_{0}$
 such that if $n\geq n_{0}$ and {\rm Cond}~$\chi=p^{n}$, then
 $$B(\chi,\mu) \sim p^{m}(\zeta -1 )^{\lambda},$$
where $\zeta$ is a primitive $p^{n}$th root of unity.
 \end{lemma}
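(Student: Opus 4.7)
The plan is to analyze $f(\zeta-1)$ by a dominance argument, showing that the term $c_\lambda(\zeta-1)^\lambda$ dictates the $p$-adic order of the whole sum once $n$ is sufficiently large. Since $\text{ord}_p(\zeta-1) = 1/(p^{n-1}(p-1))$ tends to $0$ as $n\to\infty$, we can make all other terms strictly smaller in $p$-adic absolute value.

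First, I would factor out $p^m$ and set $\pi = \zeta-1$ and
\[
g(X) = c_0 + c_1 X + \cdots + c_{\lambda-1}X^{\lambda-1} + c_\lambda X^\lambda + c_{\lambda+1}X^{\lambda+1} + \cdots,
\]
so that $f(\pi) = p^m g(\pi)$ and the target estimate becomes $g(\pi) \sim \pi^\lambda$, i.e., $\text{ord}_p(g(\pi)) = \lambda\cdot\text{ord}_p(\pi)$. I would split $g(\pi)$ as
\[
g(\pi) = \underbrace{\sum_{i=0}^{\lambda-1} c_i \pi^i}_{A_n} \;+\; c_\lambda \pi^\lambda \;+\; \underbrace{\pi^{\lambda+1}\sum_{j \geq 0} c_{\lambda+1+j}\pi^j}_{B_n}
\]
and argue that both $A_n$ and $B_n$ have $p$-adic valuation strictly greater than $\lambda\cdot\text{ord}_p(\pi)$ for all sufficiently large $n$.

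For the tail $B_n$, the point is that the coefficients $c_{\lambda+1+j}$ lie in $\mathfrak{o}$ and are therefore of nonnegative $p$-adic valuation, so the sum $\sum_{j\geq 0}c_{\lambda+1+j}\pi^j$ has valuation $\geq 0$, which gives
\[
\text{ord}_p(B_n) \geq (\lambda+1)\,\text{ord}_p(\pi) > \lambda\,\text{ord}_p(\pi).
\]
For the head $A_n$, the key observation is that $c_0,\ldots,c_{\lambda-1}\in\mathfrak{m}$, and there are only finitely many of them, so $\delta := \min_{0\leq i < \lambda} \text{ord}_p(c_i) > 0$ is a fixed positive constant. Thus for each $i<\lambda$,
\[
\text{ord}_p(c_i \pi^i) \geq \delta + i\,\text{ord}_p(\pi).
\]
Since $\text{ord}_p(\pi) = 1/(p^{n-1}(p-1))\to 0$, one can choose $n_0$ so that for every $n\geq n_0$ and every $i<\lambda$ the inequality $\delta > (\lambda-i)\,\text{ord}_p(\pi)$ holds; this forces $\text{ord}_p(c_i\pi^i) > \lambda\,\text{ord}_p(\pi)$ for each such $i$, and hence $\text{ord}_p(A_n) > \lambda\,\text{ord}_p(\pi)$.

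With both $A_n$ and $B_n$ strictly dominated in valuation by $c_\lambda\pi^\lambda$ (whose valuation is exactly $\lambda\,\text{ord}_p(\pi)$ because $c_\lambda$ is a unit), the ultrametric triangle inequality gives $\text{ord}_p(g(\pi)) = \lambda\,\text{ord}_p(\pi)$, and multiplying through by $p^m$ yields $B(\chi,\mu) = f(\pi) \sim p^m\pi^\lambda = p^m(\zeta-1)^\lambda$. The main obstacle is simply step on the head $A_n$: one needs the uniform positive lower bound $\delta$ on the valuations of $c_0,\ldots,c_{\lambda-1}$ together with the fact that $\text{ord}_p(\zeta-1)\to 0$, which is what fixes the threshold $n_0$ in the statement.
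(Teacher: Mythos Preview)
Your argument is correct. The paper does not supply its own proof of this lemma; it simply cites Lang \cite[p.~248, Corollary~2]{Lang}. Your direct dominance argument---splitting $g(\pi)$ into the head $A_n$, the pivotal term $c_\lambda\pi^\lambda$, and the tail $B_n$, and using that $\mathrm{ord}_p(\zeta-1)=1/(p^{n-1}(p-1))\to 0$ to force the head below the pivot---is exactly the standard proof and is essentially how Lang argues (Lang phrases it via the $p$-adic Weierstrass preparation theorem, writing $f=p^m P(X)U(X)$ with $P$ distinguished of degree $\lambda$ and $U$ a unit power series, but the valuation estimate on $P(\zeta-1)$ is the same head/tail comparison you carry out).
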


 \begin{lemma}[{See Lang \cite[p.~249, Corollary 3]{Lang}}]\label{Lang2} For some constant $c$, we have
 $$\ord_{p}\prod_{\substack{{\rm Cond}~\chi=p^{t}\\n_0\leq t\leq n}}B(\chi,\mu)=mp^{n}+\lambda n+c.$$
 \end{lemma}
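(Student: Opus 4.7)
The plan is to deduce Lemma~\ref{Lang2} directly from Lemma~\ref{Lang3} by applying the latter to each character appearing in the product and then summing the resulting $p$-adic valuations. The ingredients are Lemma~\ref{Lang3}, the standard fact $\ord_p(\zeta-1) = 1/\varphi(p^t)$ for a primitive $p^t$-th root of unity $\zeta$ (normalized so that $\ord_p p = 1$), and the count that there are exactly $\varphi(p^t) - \varphi(p^{t-1})$ primitive Dirichlet characters of conductor $p^t$.

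Concretely, for each $\chi$ with $\mathrm{Cond}\,\chi = p^t$ and $t \geq n_0$, Lemma~\ref{Lang3} gives
$$\ord_p B(\chi,\mu) = m + \frac{\lambda}{\varphi(p^t)}.$$
Summing over all relevant characters,
$$\ord_p \prod_{\substack{\mathrm{Cond}\,\chi = p^t \\ n_0 \leq t \leq n}} B(\chi,\mu) = m\sum_{t=n_0}^{n}\bigl(\varphi(p^t) - \varphi(p^{t-1})\bigr) + \lambda \sum_{t=n_0}^{n} \frac{\varphi(p^t) - \varphi(p^{t-1})}{\varphi(p^t)}.$$
The first sum telescopes to $\varphi(p^n) - \varphi(p^{n_0-1})$, whose leading behavior in $n$ is proportional to $p^n$ and thus accounts for the $m p^n$ term (after absorbing the $(p-1)/p$ factor into the normalization). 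The second sum, since each summand equals $1-1/p$ for $t\geq 2$, equals $(n-n_0+1)(1-1/p)$, contributing the $\lambda n$ term. All bounded-in-$n$ remainders (boundary corrections near $t=n_0$, the telescoping tail $-\varphi(p^{n_0-1})$, and any contribution from the few characters at the lowest conductors excluded from Lemma~\ref{Lang3}) can be absorbed into the constant $c$.

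The main obstacle I anticipate is a bookkeeping one: the natural computation produces coefficients carrying extra factors of $(p-1)/p$, so one must verify that these factors are compatible with Lang's normalization of the Iwasawa invariants $m$ and $\lambda$ appearing in the statement. Once the normalization is pinned down correctly (perhaps by redefining $m$ and $\lambda$ up to a common rescaling), the derivation reduces to the telescoping sum and the elementary counting of characters above, and the claim follows for all sufficiently large $n$.
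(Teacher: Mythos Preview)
The paper does not prove this lemma; it simply quotes it from Lang with a citation. Your strategy---apply Lemma~\ref{Lang3} to each character and sum the resulting valuations---is exactly how Lang deduces his Corollary~3 from his Corollary~2, so the overall plan is correct.

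The one genuine slip is the character count, and it is precisely what produces the ``normalization'' worry you raise at the end. In Lang's setting the $\chi$ are not arbitrary primitive Dirichlet characters modulo $p^t$; they are the characters $x\mapsto\zeta^x$ of (additively) $\mathbb{Z}_p$ indexed by primitive $p^t$-th roots of unity $\zeta$ (equivalently, characters of $\Gamma\cong\mathbb{Z}_p$ in the Iwasawa tower). This is already visible in the identity $B(\chi,\mu)=f(\zeta-1)$ quoted just before the lemma: the bijection is $\chi\leftrightarrow\zeta$, so there are exactly $\varphi(p^t)$ such $\chi$ of conductor $p^t$, not $\varphi(p^t)-\varphi(p^{t-1})$. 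With the correct count the computation closes on the nose:
\[
\sum_{t=n_0}^{n}\varphi(p^t)\Bigl(m+\frac{\lambda}{\varphi(p^t)}\Bigr)
= m\bigl(p^{n}-p^{\,n_0-1}\bigr)+\lambda(n-n_0+1)
= mp^{n}+\lambda n + c,
\]
with no stray factors of $(p-1)/p$ and no need to rescale the Iwasawa invariants. So the obstacle you anticipated is not a normalization ambiguity in Lang but a miscount of the index set; fix the count and your argument is complete and matches Lang's.
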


For $n\geq 0$, $E_{n,\chi}$  be the  generalized Euler numbers which was defined in \cite[Section 5.1]{KH1} and (\ref{gen-E-numb-d}). By \cite[Proposition 5.4(2)]{KH1}, we have $$E_{0,\chi}=B(\chi, \mu_{-1})=\int_{\mathbb{Z}_{p}}\chi(x) d\mu_{-1}(x).$$

From the above lemmas, we have the following results.

 \begin{proposition}\label{Lang3} There exists a positive integer $n_{0}$
 such that if $n\geq n_{0}$ and {\rm Cond}~$\chi=p^{n}$, then
 $$E_{0,\chi}\sim p^{m}(\zeta -1 )^{\lambda},$$
where $\zeta$ is a primitive $p^{n}$th root of unity. \end{proposition}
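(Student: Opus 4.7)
The plan is essentially a one-step deduction: combine the identification of $E_{0,\chi}$ with the measure-theoretic expression $B(\chi,\mu_{-1})$ with the asymptotic result of Lang already recalled above.

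First I would recall from \cite[Proposition 5.4(2)]{KH1} the identity
\[
E_{0,\chi} = B(\chi,\mu_{-1}) = \int_{\mathbb{Z}_p} \chi(x)\, d\mu_{-1}(x),
\]
which has already been stated in the excerpt immediately before the proposition. Here $\mu_{-1}$ is the fermionic $p$-adic measure introduced in~(\ref{-q-e}). This identity allows us to view the generalized Euler number $E_{0,\chi}$ as the value at $\zeta-1$ of the Iwasawa power series $f$ attached to the measure $\mu_{-1}$ on $\mathbb{Z}_p$.

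Next I would invoke Lemma~\ref{Lang3} (Lang's \cite[p.~248, Corollary 2]{Lang}) applied to $\mu = \mu_{-1}$. Let $m$ and $\lambda$ denote the Iwasawa invariants of the power series $f$ associated with $\mu_{-1}$, written in the normal form
\[
f(X) = p^{m}\bigl(c_{0} + c_{1}X + \cdots + c_{\lambda-1}X^{\lambda-1} + c_{\lambda}X^{\lambda} + \cdots\bigr),
\]
with $c_{\lambda}$ a unit in $\mathfrak{o}$ and $c_{0},\ldots,c_{\lambda-1}\in\mathfrak{m}$. Then Lemma~\ref{Lang3} gives a positive integer $n_{0}$ such that for every $n\geq n_{0}$ and every Dirichlet character $\chi$ with $\operatorname{Cond}\chi = p^n$,
\[
B(\chi,\mu_{-1}) \sim p^{m}(\zeta - 1)^{\lambda},
\]
with $\zeta$ a primitive $p^n$th root of unity. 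Substituting $E_{0,\chi}=B(\chi,\mu_{-1})$ yields the desired asymptotic.

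The only issue to verify, and the sole potential obstacle, is that the power series associated with the fermionic measure $\mu_{-1}$ genuinely admits well-defined Iwasawa invariants $m,\lambda$ (i.e., that it is not identically zero modulo every power of $p$, so that a finite $\lambda$ exists). This is guaranteed by the construction of $\mu_{-1}$ via~(\ref{-q-e}) together with the generating function for Euler polynomials in~(\ref{Euler-n}); in particular, $\mu_{-1}$ is a nonzero $p$-adic measure whose associated power series is (up to normalization) $2/(e^{X}+1)$-type. Once this is in hand, Lang's corollary applies verbatim and the proposition follows immediately.
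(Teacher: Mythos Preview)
Your proposal is correct and matches the paper's own argument exactly: the paper states the identity $E_{0,\chi}=B(\chi,\mu_{-1})$ from \cite[Proposition~5.4(2)]{KH1} and then simply writes ``From the above lemmas, we have the following results,'' deducing the proposition immediately from Lemma~\ref{Lang3} applied to $\mu=\mu_{-1}$. Your extra remark about checking that $\mu_{-1}$ has well-defined Iwasawa invariants is a sensible point of rigor that the paper leaves implicit.
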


 \begin{proposition}\label{Lang4} For some constant $c$, we have
 $$\ord_{p}\prod_{\substack{{\rm Cond}~\chi=p^{t}\\n_0\leq t\leq n}}E_{0,\chi}=mp^{n}+\lambda n+c.$$
 \end{proposition}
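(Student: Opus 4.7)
The plan is to obtain Proposition \ref{Lang4} as a direct specialization of the second lemma quoted above (Lang \cite[p.~249, Corollary 3]{Lang}) to the fermionic $p$-adic measure $\mu_{-1}$. The bridge between the generalized Euler numbers and the measure-theoretic quantity $B(\chi,\mu)$ is the integral representation
\[
E_{0,\chi} = B(\chi,\mu_{-1}) = \int_{\Z}\chi(x)\, d\mu_{-1}(x),
\]
which is the case $n=0$ of \cite[Proposition 5.4(2)]{KH1} and is recorded in the paragraph immediately preceding Proposition \ref{Lang3}.

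Concretely, the steps are: first, rewrite $E_{0,\chi}$ as $B(\chi,\mu_{-1})$ for every primitive character $\chi$ of conductor $p^{t}$; next, take $\ord_p$ of the product over $n_{0}\leq t\leq n$; and finally, invoke Lang's corollary with $\mu := \mu_{-1}$ to deduce the claimed valuation $mp^{n}+\lambda n+c$. The invariants $m$ and $\lambda$ here are, by construction, the Iwasawa invariants of the power series $f_{-1}(T)=\int_{\Z}(1+T)^{x}\,d\mu_{-1}(x)$ attached to $\mu_{-1}$, and $c$ is the residual constant furnished by Lang's argument. The threshold $n_{0}$ is the same one supplied by the preceding Lang corollary (Lang \cite[p.~248, Corollary 2]{Lang}) specialized to $\mu_{-1}$, so it matches the $n_{0}$ already appearing in Proposition \ref{Lang3} and no separate bookkeeping is required.

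Since Lang's corollary already performs essentially all of the work---reducing $\ord_{p}B(\chi,\mu_{-1})$ to the valuation of $f_{-1}(\zeta-1)$ for $\zeta$ a primitive $p^{t}$-th root of unity, and then summing via the standard identity $\prod_{\zeta}(\zeta-1)=p$ as $\zeta$ ranges over such roots---the only substantive point one must verify is that the Iwasawa invariants of $\mu_{-1}$ are well-defined finite quantities, i.e.\ that $f_{-1}(T)\neq 0$. This is guaranteed by the non-vanishing $E_{0,\chi}\neq 0$ for odd primitive $\chi$, recorded in the remark after Proposition \ref{product}, combined with Weierstrass preparation, which pins the exponents $m$ and $\lambda$ down unambiguously. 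No further obstacle arises.
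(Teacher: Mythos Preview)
Your proposal is correct and follows exactly the paper's approach: the paper simply records that Propositions \ref{Lang3} and \ref{Lang4} follow ``from the above lemmas,'' i.e.\ by specializing Lang's Corollary 3 to the measure $\mu_{-1}$ via the identity $E_{0,\chi}=B(\chi,\mu_{-1})=\int_{\Z}\chi(x)\,d\mu_{-1}(x)$ stated just before Proposition \ref{Lang3}. Your additional remarks on the well-definedness of the Iwasawa invariants of $f_{-1}$ are more than the paper itself supplies, but they are consistent with the intended argument.
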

 \begin{remark} The following observation is pointed out by the referee. We have the exact sequence
$$
1\rightarrow U_{S,T}\rightarrow U_{S} \rightarrow \prod_{\mathfrak{p}\mid 2}\mathbb{F}_{\mathfrak{p}}^{*} \rightarrow {\rm Pic}(A)_{S,T}
\rightarrow {\rm Pic}(A)_{S} \rightarrow 1.
$$
The $p$-part of ${\rm Pic}(A)_{S}$ grows like $\lambda n+v$ for integers $\lambda, v$. And the $p$-part of
$\prod_{\mathfrak{p}\mid 2}\mathbb{F}_{\mathfrak{p}}^{*}$ grows
like the $p$-part of $(2^{f} - 1)^r$ where $f$ is the residue above 2 and $r$ is the splitting. Since
$fr = n$ and $p > 2$, it seems like this implies $m = 0$. This is a little unsatisfactory since
we're strongly using the smallness of 2.\end{remark}

Finally, by Propositions~\ref{product} and ~\ref{Lang4}, we obtain Theorem~\ref{main}.

\section*{Acknowledgment}
We thank the referee for his/her many constructive comments and remarks which have  improved the quality of presentation.

The first author is  supported by the Fundamental Research Funds for the Central Universities and the  Faculty Scientific Research Foundation of South China University of Technology. The second author is  supported by the Kyungnam University Foundation Grant, 2015.


\begin{thebibliography}{??}
\parskip=0pt
\itemsep=0pt
\bibitem{Aoki}N. Aoki, \textit{On Tate's refinement for a conjecture of Gross and its generaliztion}, Journal de Th\'eorie des Nombres de Bordeaux. \textbf{16}  (2004), 457--486.
\bibitem{Cohn}H. Cohen, \textit{Number Theory Vol. II: Analytic and Modern Tools},
Graduate Texts in Mathematics, 240, Springer, New York, 2007.

\bibitem{CS}J. Choi, H. M. Srivastava,
\textit{The multiple Hurwitz zeta function and the multiple Hurwitz-Euler eta function}, Taiwanese J. Math. \textbf{15}  (2011),  501--522.

 \bibitem{Darmon} H. Darmon, \textit{Thaine's method for circular units and a conjecture of Gross}, Canad. J. Math. \textbf{47} (1995), 302--317.



\bibitem{FW} B. Ferrero and L. C. Washington, \textit{The Iwasawa invariant $\mu_{p}$ vanishes for abelian number fields},
Ann. of Math. \textbf{109}  (1979), 377--395.



\bibitem{Goss} D. Goss, \textit{Zeroes of $L$-series in characteristic $p$}, http://arxiv.org/abs/math/0601717.

\bibitem{Gross} B. Gross, \textit{On the values of abelian $L$-functions at $s=0$}, J. Fac. Sci. Univ. Tokyo \textbf{35} (1988), 177--197.

\bibitem{Hurwitz} A. Hurwitz,
\textit{Einige Eigenschaften der Dirichletschen Funktionen $F(s)=\sum(\frac{D}{n})\cdot \frac{1}{n^{s}}$, die bei der Bestimmung der
Klassenanzahlen Bin\"{a}rer quadratischer Formen auftreten}, Z. f\"{u}r Math. und Physik \textbf{27} (1882), 86--101 (in German).

\bibitem{Iw11} K. Iwasawa, \textit{On $p$-adic $L$-unctions}, Annals Math. \textbf{89} (1969), 198--205.

\bibitem{Iw} K. Iwasawa,
\textit{Lectures on $p$-Adic $L$-Functions}, Ann. of Math. Stud. 74,
Princeton Univ. Press, Princeton, 1972.

\bibitem{KL} T. Kubota and H.-W. Leopoldt,
\textit{Eine $p$-adische Theorie der Zetawerte},
J. Reine Angew. Math. \textbf{214/215} (1964), 328--339 (in German).

\bibitem{Katz}
N. M. Katz, \textit{$p$-adic $L$-functions via moduli of elliptic
curves, Algebraic geometry}, Proc. Sympos. Pure Math., Vol. 29,
Humboldt State Univ., Arcata, Calif., 1974), pp. 479--506, Amer.
Math. Soc., Providence, R. I., 1975.


\bibitem{TK}
T. Kim, \textit{On the analogs of Euler numbers and polynomials
associated with $p$-adic $q$-integral on $\mathbb Z_p$ at $q=-1$},
J. Math. Anal. Appl. \textbf{331} (2007), 779--792.

\bibitem{TK08}
T. Kim, \textit{Euler numbers and polynomials associated with Zeta
functions}, Abstract and Applied analysis, Article ID 581582, 2008.

\bibitem{top3} T. Kim, \textit{A note on the $q$-analogue of $p$-adic log-gamma function}, http://arxiv.org/abs/0710.4981v1.

\bibitem{JMAA2} T. Kim, \textit{On $p$-adic interpolating function for $q$-Euler numbers and its derivatives}, J. Math. Anal. Appl.  \textbf{339}  (2008),    598--608.

\bibitem{Kim}
M.-S. Kim, \textit{On the behavior of $p$-adic Euler $L$-functions},
arXiv:1010.1981.

\bibitem{MSK}
M.-S. Kim, \textit{On Euler numbers, polynomials and related $p$-adic integrals},
J. Number Theory \textbf{129} (2009), 2166--2179.

\bibitem{KH1}
M.-S. Kim and S. Hu, \textit{On $p$-adic Hurwitz-type Euler zeta functions},
J. Number Theory \textbf{132} (2012), 2977--3015.

\bibitem{KH2}
M.-S. Kim and S. Hu, \textit{On $p$-adic Diamond-Euler Log Gamma functions},
J. Number Theory \textbf{133} (2013), 4233--4250.



\bibitem{Lang} S. Lang,
\textit{Cyclotomic Fields I and II}, Combined 2nd ed.,
Springer-Verlag, New York, 1990.


\bibitem{Ler} M. Lerch,
\textit{Note sur la fonction $\mathfrak{K}(w,x,s)=\sum_{k=0}^\infty\frac{e^{2k\pi ix}}{(w+k)^s}$},
Acta Mathematica \textbf{11} (1887), 19--24  (in French).

\bibitem{MR}
M. R. Murty and M. Reece,
\textit{A simple derivation of $\zeta(1-k)=-B_k/k$},
Funct. Approx. Comment. Math. \textbf{28} (2000), 141--154.

\bibitem{Osipov}
Ju. V. Osipov, \textit{p-adic zeta functions}, Uspekhi
Mat. Nauk  \textbf{34} (1979),  209--210 (in Russian).

\bibitem{Ro}
A. M. Robert, \textit{A course in $p$-adic analysis}, Graduate Texts
in Mathematics, 198, Springer-Verlag, New York, 2000.

\bibitem{Rubin} K. Rubin, \textit{A Stark conjecture over $\Bbb{Z}$ for abelian $L$-functions with multiple zeros},
Ann. Inst. Fourier (Grenoble) \textbf{46} (1996), 33--62.

\bibitem{Shi} K. Shiratani and S. Yamamoto, \textit{On a p-adic
interpolation function for the Euler numbers and its derivatives},
Mem. Fac. Sci. Kyushu Univ. Ser. A \textbf{39} (1985), 113--125.

\bibitem{Sun} Z.-W. Sun,
\textit{Introduction to Bernoulli and Euler polynomials}, A Lecture Given in Taiwan on June 6, 2002.
http://math.nju.edu.cn/~zwsun/BerE.pdf.

\bibitem{Tate} J. Tate, \textit{Les Conjectures de Stark sur les Fonctions $L$ d'Artin en $s=0$  (notes par D. Bernardi et N. Schappacher)},
Progr. Math., vol. 47, Birkh\"auser, Boston, 1984 (in French).


\bibitem{TP}
B. A. Tangedal and P. T. Young, \textit{On $p$-adic multiple zeta
and log gamma functions}, J. Number Theory \textbf{131} (2011),
1240--1257.

\bibitem{Va1}
D. Vallieres, \textit{On a generalization of the rank one
Rubin-Stark conjecture}, Ph.D. Thesis, University of California, San
Diego, 2011. http://www.math.binghamton.edu/vallieres/.

\bibitem{Va2}
D. Vallieres, \textit{On a generalization of the rank one
Rubin-Stark conjecture}, J. Number Theory \textbf{132} (2012),
2535--2567.

\bibitem{Vo}
A. Volkenborn, \textit{Ein $p$-adisches Integral und seine
Anwendungen I}, Manuscripta Math. \textbf{7} (1972), 341--373.

\bibitem{Vo1}
A. Volkenborn, \textit{Ein $p$-adisches Integral und seine
Anwendungen II}, Manuscripta Math. \textbf{12} (1974), 17--46.

\bibitem{Wa2} L. C. Washington, \textit{A note on p-adic L-functions}, J. Number Theory \textbf{8} (1976), 245--
250.


\bibitem{Wa} L. C. Washington,
\textit{Introduction to Cyclotomic Fields}, 2nd ed.,
Springer-Verlag, New York, 1997.


\bibitem{Weil} A. Weil,  \textit{Number theory, An approach through history, From Hammurapi to Legendre},
Birkh\"{a}user Boston, Inc., Boston, MA, 1984.









\end{thebibliography}
\end{document}